\newtheorem{theorem}{Theorem}[section]
\newtheorem{proposition}[theorem]{Proposition}
\newtheorem{corollary}[theorem]{Corollary}
\theoremstyle{definition}
\theoremstyle{remark}
\newtheorem{remark}[theorem]{Remark}
\numberwithin{equation}{section}
\begin{document}

\title{Purely inseparable points on curves}

%    Information for first author
\author{Damian R\"ossler}
%    Address of record for the research reported here
\address{Mathematical Institute\\ 
University of Oxford\\
Andrew Wiles Building\\
Radcliffe Observatory Quarter\\
Woodstock Road\\
Oxford\\
OX2 6GG\\
United Kingdom}
%    Current address
\curraddr{Mathematical Institute\\ 
University of Oxford\\
Andrew Wiles Building\\
Radcliffe Observatory Quarter\\
Woodstock Road\\
Oxford\\
OX2 6GG\\
United Kingdom}
\email{rossler@maths.ox.ac.uk}
%    \thanks will become a 1st page footnote.
% \thanks{The first author was supported in part by NSF Grant \#000000.}

%    Information for second author
%\author{Author Two}
%\address{Mathematical Research Section, School of Mathematical Sciences,
%Australian National University, Canberra ACT 2601, Australia}
%\email{two@maths.univ.edu.au}
% \thanks{Support information for the second author.}

%    General info

\subjclass{12F15, 11G30}
% \date{...}

\dedicatory{To Gerhard Frey on the occasion of his $75^{\tiny th}$ birthday.}

\keywords{function fields, curves, inseparability, rational points}

\begin{abstract}
We give effective upper bounds for the number of purely inseparable points on non isotrivial curves over function fields 
of positive characteristic and of transcendence degree one. These bounds depend on the genus of the curve, the genus of the function field and the number of 
points of bad reduction of the curve. 
\end{abstract}

\maketitle

\section{Introduction}

I am very happy to have the possibility to dedicate this article to Gerhard Frey, who made so many interesting contributions to number theory. An electronic correspondence with him on 
the subject of Kim's article \cite{Kim-Purely} prompted me to look for effective upper bounds for the number of purely inseparable points on curves and the present text summarises what I found.

Let $k$ be an algebraically closed field of characteristic $p>0$. 
Let $K$ be the function field of a smooth, proper and geometrically connected 
curve $B$ over $k$. Let $C$ be a smooth, proper and geometrically connected  
curve over $K$. Let $g_B$ (resp. $g_C$) be the genus of $B$ (resp. $C$). 
We shall denote by $A$ the Jacobian of $C$ over $K$. 
We let $\Sigma\subseteq B$ be the 
set of closed points of bad reduction of $A$ and we let $N=\#\Sigma$. 
Suppose that $g_C>0$ and that $C(K)\not=\emptyset$. We suppose that $C$ is not isotrivial over $K$. By definition, this means that there is no finite  
field extension $L|K$, such that $C_L\simeq C_{0,L}$, where $C_0$ is a smooth curve over $k$. 
We choose an algebraic closure $\bar K$ of $K$ and we denote by $K_{\rm ins}:=\bigcup_{i\geq 0}K^{p^{-i}}\subseteq\bar K$ the maximal purely inseparable extension of $K$ in $\bar K$. This is also called
the perfection of $K$. The elements of $C(K_{\rm ins})$ will be called the {\it perfect points} or {\it purely inseparable points} of $C$. 

We shall prove

\begin{theorem}
{\rm (a)} If $g_C>1$ then $C(K_{\rm ins})$ is a finite set.

\smallskip
{\rm (b)} Suppose that the Jacobian $A$ of $C$ has semiabelian reduction over $B.$ 

Let $\ell_0\geq 1$ be such that 
\begin{equation}
p^{\ell_0}-p^{\ell_0-1}g_C(g_C-1){\max}\{2g_B-2,0\}-g_C(2g_B-2)-g_C N>0.
\label{BH00}
\end{equation}
Let $\ell$ be the smallest integer such that $\ell\geq \ell_0$ and $p^{\ell}>2g_C-2$. Then:

- the set $C(K_{\rm ins})\smallsetminus C(K^{p^{-\ell+1}})$ 
is empty; 

- we have $\#(C(K_{\rm ins})\smallsetminus C(K^{p^{-\ell_0+1}}))\leq 2g_C-2.$
\label{mainth}
\end{theorem}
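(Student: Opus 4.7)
The plan is to prove (b) via intersection theory on an arithmetic surface model of $C$ over $B$, and then to deduce (a) as a corollary. For (b), I would first fix the Abel--Jacobi embedding $\iota\colon C\hookrightarrow A$ using the given $K$-rational point, and extend to a regular proper model $\pi\colon\mathcal{C}\to B$ together with the semiabelian N\'eron model $\mathcal{A}\to B$. For a purely inseparable point $P\in C(K^{p^{-n}})\smallsetminus C(K^{p^{-n+1}})$ of exact level $n$, properness would extend $P$ to a section $B^{(1/p^n)}\to\mathcal{C}\times_B B^{(1/p^n)}$; projecting to $\mathcal{C}$ yields a horizontal curve $\Gamma_P\subset\mathcal{C}$ of relative degree $p^n$ over $B$ and geometric genus $g_B$ (the morphism $\Gamma_P\to B$ being purely inseparable).

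The crux of (b) will be bounding, from two directions, intersection numbers involving $\Gamma_P$, $\omega_{\mathcal{C}/B}$, and the self-intersection $\Gamma_P^{2}$ on the arithmetic surface $\mathcal{C}$. Adjunction for $\Gamma_P$ combined with a decomposition of $\Gamma_P\cdot\omega_{\mathcal{C}/B}$ into generic-fibre contribution (of size $(2g_C-2)p^n$) and bad-fibre contribution (bounded by a multiple of $g_C N\cdot p^n$ using the semistable structure at $\Sigma$) would provide one relation; the factorisation of $\sigma_P$ through the iterated relative Frobenius $F^{n}_{\mathcal{A}/B}$ (of degree $p^{ng_C}$), together with the semiabelian structure on $\mathcal{A}$, would yield an upper bound on $\Gamma_P^{2}$ with leading term $p^{n-1}g_C(g_C-1)\max\{2g_B-2,0\}$. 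Combining the two estimates would produce precisely \eqref{BH00}: positivity of the LHS for $n\geq\ell_0$ confines any remaining $\Gamma_P$ to a distinguished divisor class on $C$ of degree $\leq 2g_C-2$; the sharper condition $p^{\ell}>2g_C-2$ then eliminates even that class for $n\geq\ell$, via a positivity argument ruling out a non-zero global section of $\omega_{C/K}$ vanishing with total order $\geq p^{\ell}$.

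Part (a) would follow: $C(K^{p^{-m}})$ is finite at each level $m$ by the classical Mordell conjecture of Samuel--Grauert applied to the non-isotrivial curve $C_{K^{p^{-m}}}$ of genus $g_C>1$; combined with a variant of (b), applied after a finite separable extension to achieve semiabelian reduction, this shows the union stabilises at some level $\ell$, so $C(K_{\mathrm{ins}})$ is finite.

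The main obstacle I anticipate is the sharp self-intersection bound $\Gamma_P^{2}\leq p^{n-1}g_C(g_C-1)\max\{2g_B-2,0\}$ with the correct leading coefficient; extracting this from the semiabelian structure will require a careful interplay of the Frobenius--Verschiebung formalism on $\mathcal{A}$ with an accounting of the contribution from the bad fibres that yields the specific $g_C N$ term in \eqref{BH00}. The final elimination step via $p^{\ell}>2g_C-2$ should reduce to a short Riemann--Roch argument for $\omega_{C/K}$.
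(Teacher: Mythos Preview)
Your plan takes a genuinely different route from the paper. The paper never computes intersection numbers on a regular model of $C$; it works instead with the Hodge bundle $\omega$ of the connected component $\mathcal{A}$ of the N\'eron model and the theory of semistable bundles on $B$. To a point $P\in C(K^{p^{-\ell}})\smallsetminus C(K^{p^{-\ell+1}})$ it attaches a nonzero map $\phi_P\colon F_B^{\circ\ell,*}(\omega)\to\Omega_{B/k}(\Sigma)$ of bundles on $B$ (this extension to $B$ with at worst logarithmic poles along $\Sigma$ is the one non-elementary input, quoted as Proposition~\ref{cruxlem}). If $\omega_1\subseteq\omega$ is the first step of the Harder--Narasimhan filtration, then $\phi_P|_{F_B^{\circ\ell,*}(\omega_1)}=0$ as soon as $p^{\ell_0}(\mu(\omega_1)-\alpha(\omega_1))>2g_B-2+N$, by comparison of slopes; any nonzero $\lambda\in\omega_{1,K}$ then yields a holomorphic differential on $C$ vanishing at every such $P$, whence both the bound $2g_C-2$ and the emptiness once $p^\ell>2g_C-2$. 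Inequality \eqref{BH00} is obtained by inserting the crude estimate $\mu(\omega_1)\geq 1/g_C$ together with Langer's bound $\alpha(\omega_1)\leq\frac{{\rm rk}(\omega_1)-1}{p}\max\{2g_B-2,0\}\leq\frac{g_C-1}{p}\max\{2g_B-2,0\}$.

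This is exactly where your proposal has a genuine gap. The coefficient $g_C(g_C-1)\max\{2g_B-2,0\}$ in front of $p^{\ell_0-1}$ in \eqref{BH00} is not an intrinsic invariant of $\Gamma_P\subset\mathcal{C}$; it is an artifact of multiplying Langer's inequality (which supplies the factor $g_C-1$) by the denominator of the slope bound $\mu(\omega_1)\geq 1/g_C$ (which supplies the factor $g_C$). You propose to recover this number as a bound on $\Gamma_P^{2}$ via ``the Frobenius--Verschiebung formalism on $\mathcal{A}$'', but no such bound is visible from that formalism, and you yourself flag this step as the main obstacle without offering a mechanism. (Your decomposition of $\Gamma_P\cdot\omega_{\mathcal{C}/B}$ into a ``generic-fibre contribution'' $(2g_C-2)p^n$ plus bad-fibre terms is also not well-posed: this is a single intersection number on the surface $\mathcal{C}$, not a sum over fibres.) An arithmetic-surface approach in the spirit of Kim can indeed yield the qualitative finiteness in (a), as the paper itself notes; but the specific effective constants in (b) encode Langer's theorem on Frobenius pull-backs of semistable sheaves, and your outline provides no substitute for that input.
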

Note that inequality \eqref{BH00} is a condition on both $p$ and $\ell_0$. 
For a fixed $\ell_0$ (for example $\ell_0=1$) it is satisfied if $p$ is large enough. 
On the other hand, if $$g_C(g_C-1){\max}\{2g_B-2,0\}<p$$ then it will be satisfied for any sufficiently 
large $\ell_0$. In particular, we have the
\begin{corollary}
Suppose that the Jacobian $A$ of $C$ has semiabelian reduction over $B$. 
If $g_B>0$ and 
$$
p>{\max}\{2g_C-2,g^2_C(2g_B-2)+g_C N\}
$$
then the set $C(K_{\rm ins})\smallsetminus C(K)$ is empty. If $g_B=0$ and 
$$
p>{\max}\{2g_C-2,g_C(N-2)\}
$$
then the set $C(K_{\rm ins})\smallsetminus C(K)$ is empty.
\label{maincor}
\end{corollary}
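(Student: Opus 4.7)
The plan is to derive the corollary as a direct specialisation of Theorem~\ref{mainth}(b), taking $\ell_0=1$. With this choice, inequality \eqref{BH00} becomes
\begin{equation*}
p - g_C(g_C-1)\max\{2g_B-2,0\} - g_C(2g_B-2) - g_C N > 0,
\end{equation*}
and the hypothesis $p>2g_C-2$ (present in both cases of the corollary) guarantees that the smallest integer $\ell\geq \ell_0=1$ with $p^{\ell}>2g_C-2$ is $\ell=1$. Consequently, the first conclusion of Theorem~\ref{mainth}(b) reads $C(K_{\rm ins})\smallsetminus C(K^{p^{0}})=C(K_{\rm ins})\smallsetminus C(K)=\emptyset$, which is exactly what we want to establish.

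It therefore only remains to unwind the numerical condition on $p$ in the two cases. When $g_B>0$, one has $\max\{2g_B-2,0\}=2g_B-2$, and the displayed inequality rearranges as
\begin{equation*}
p > g_C(g_C-1)(2g_B-2) + g_C(2g_B-2) + g_C N = g_C^2(2g_B-2) + g_C N,
\end{equation*}
which together with $p>2g_C-2$ is precisely the first hypothesis of the corollary. When $g_B=0$, one has $\max\{2g_B-2,0\}=0$, so the inequality collapses to
\begin{equation*}
p > g_C(2g_B-2) + g_C N = -2g_C + g_C N = g_C(N-2),
\end{equation*}
which, combined with $p>2g_C-2$, is the second hypothesis.

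In summary, in each case the hypotheses of the corollary are exactly the hypotheses of Theorem~\ref{mainth}(b) for $\ell_0=1$ (and hence $\ell=1$), and the conclusion that $C(K_{\rm ins})\smallsetminus C(K)$ is empty follows immediately from the first bullet point of that theorem. There is no real obstacle here; the only thing to verify carefully is the elementary algebraic identity $g_C(g_C-1)(2g_B-2)+g_C(2g_B-2)=g_C^2(2g_B-2)$ in the case $g_B>0$, so that the bound in \eqref{BH00} matches the bound stated in the corollary.
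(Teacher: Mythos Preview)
Your proof is correct and follows exactly the derivation the paper intends: the corollary is just Theorem~\ref{mainth}(b) with $\ell_0=1$, so that $p>2g_C-2$ forces $\ell=1$ and the first bullet point yields $C(K_{\rm ins})\smallsetminus C(K)=\emptyset$. The algebraic simplifications in the two cases $g_B>0$ and $g_B=0$ are handled correctly.
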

%One interesting qualitative application of Corollary \ref{maincor} is the following. Suppose 
%we are given a smooth family of geometrically connected curves $\widetilde{B}$ over an open 
%subset of ${\rm Spec}(\mZ)$, together with a smooth family of geometrically connected curves $\widetilde{C}$ of genus $>0$ over 
%an open subset of $\widetilde{B}$. Suppose that the generic fibre of $\widetilde{C}$ over $\kappa(\widetilde{B})$ is not isotrivial. Then for almost all prime numbers $p$, the set 
%$\widetilde{C}_p(\kappa(\widetilde{B}_p)_{\rm ins})\smallsetminus C_p(\kappa(\widetilde{B}_p))$ is empty.
%Note that no assumption of semistability is necessary here, because the Jacobian of the generic fibre 
%of $\widetilde{C}$ can be made semiabelian by making a finite number of torsion points rational and this 
%makes the Jacobians of almost all the fibres $\widetilde{C}_p$ semiabelian in finite separable extensions. 
%We leave the details of the construction to the reader.
\begin{remark}\rm\label{MC} Theorem \ref{mainth} (a) also follows from the Claim on page 665 of  \cite{Kim-Purely}. \end{remark}
\begin{remark}\label{ML}\rm  Let $L|K$ be a finite field extension. If $g_C>1$ then 
$C(L)$ is finite. This is a consequence of (a strong form of) the Mordell conjecture over function fields 
(see \cite{Samuel-Comp}).\end{remark}
\begin{remark}\rm The Jacobian $A$ of $C$ has semiabelian reduction over $B$ if and only if the minimal 
regular model of $C$ over $B$ has semistable reduction. See eg \cite[4.]{Abbes-RSS} for a lucid 
exposition.\end{remark}
%\begin{rem}\rm Recall that by a famous theorem of Grothendieck the Jacobian of $C$ will have semiabelian reduction after 
%a finite separable extension of $K$. This finite extension can be effectively computed 
%(eg using ..) so that Theorem \ref{mainth} (b) can be used to estimate to number of 
%perfect points even without the assumption of semiabelian reduction.\end{rem}
%\begin{rem}\rm In .., M. Kim describes a proof of Theorem \ref{mainth} (a). 
%It should be note that there is apparently a small gap in the proof. At the end of his proof, 
%Kim makes use of Hilbert schemes and claims that the number of elements appearing 
%in a certain bounded family is finite. This seems to be correct only when $k$ is replaced by a finite field. In particular, his method gives a proof of the analogue of Theorem \ref{mainth} (a), where $k$ is supposed to be a finite field. However, a small variation of the method of Kim 
%gives a full proof of Theorem \ref{mainth} (a); see section 3 in 
%\verbatim{http://www.math.uni-konstanz.de/~fehm/papers/rankII.pdf}. \end{rem}

{\bf Outline of the proof of Theorem \ref{mainth}.} The basic idea behind the proof of
Theorem \ref{mainth} comes from M. Kim's article \cite{Kim-Purely}. However, unlike him we make use of 
the connected component of the N\'eron model of $A$ and of the theory of semistable sheaves in positive characteristic. By contrast Kim 
uses the theory of heights and the fact that the relative dualising sheaf of the minimal regular model of $C$ over $B$ is 
big over $k$. This follows from \cite[p. 53, after Th. 2]{Szpiro-PNDR} (private
communication from M. Kim to the author). The proof is in several steps. 

Step (1) We associate with a purely inseparable point of degree $p^\ell$ over $K$ a non vanishing map of $K$-vector spaces 
$F_K^{\circ\ell,*}(\omega_{K})\to\Omega_{K/k}$. Here $\omega$ is the Hodge bundle of the connected component of the N\'eron model of $A$, ie its sheaf of differentials restricted to the zero section. 
This map is basically just a restriction map (see details below).

Step (2) We quote a result from \cite{Rossler-PII} stating that the poles of this map only have 
logarithmic singularities along the points of bad reduction of $A$, so that it extends to a morphism of vector bundles
$F_B^{\circ\ell,*}(\omega)\to\Omega_{B/k}(\Sigma)$. 

Step (3) From a result of Szpiro (see below for references), we know that 
${\rm deg}_B(\omega)>0$ and thus $\omega$ contains a semistable subsheaf $\omega_1$ of positive slope. 
If the semistability of $\omega_1$ were preserved under iterated Frobenius pull-backs, 
we could conclude that if $\ell$ is large enough, the image of $F_B^{\circ\ell,*}(\omega_1)$ 
in $\Omega_{B/k}$ would vanish. Choosing a non-zero differential form $\lambda\in\omega_{1,K}$, 
we could conclude that $\lambda$ vanishes on all the purely inseparable points of sufficiently large degree. 
This would immediately show that there is only finitely many such points. However, we cannot assume that the semistability of $\omega_1$ is preserved under iterated Frobenius pull-backs.

Step (4) To deal with this last issue, we use a result of Langer, which gives a numerical 
measure for the lack of semistability of Frobenius pull-backs of semistable sheaves. 
Using this, up to making assumption on $p$ and $\ell$, we can basically pull through 
Step (3).

The structure of the paper is as follows. In section \ref{SS} we recall the results from the theory of semistability of vector bundles on curves that we shall need. In section \ref{SP}, we prove 
Theorem \ref{mainth}.

{\bf Notation.} If $S$ is a scheme of characteristic $p$, we write $F_S$ for the 
absolute Frobenius endomorphism of $S$. If $T$ is a scheme over $S$ and $n\geq 0$, 
we write $T^{(p^n)}$ for the base-change of $T$ by $F_S^{\circ n}$. For any integral scheme $W$ we write $\kappa(W)$ for the local ring at the generic point of $W$ (which is a field). If $f:X\to Y$ is a morphism of schemes, we write $\Omega_f$ or $\Omega_{X/Y}$ for the sheaf of differentials of the morphism $f$. 

{\bf Acknowledgments.} Many thanks to the anonymous referee for his useful remarks and suggestions of improvement. I am also grateful to Moshe Jarden for his comments.

\section{Semistable sheaves}

\label{SS}

We shall need some terminology and results from the theory of semistability of vector bundles over curves.

If $V$ be a non zero vector bundle (ie a coherent locally free sheaf) on $B$, we write as is customary
$$
\mu(V):={\rm deg}(V)/{\rm rk}(V)
$$
where ${\rm deg}(V)$ is the degree of the line bundle $\det(V)$ and 
${\rm rk}(V)$ is the rank of $V$. Recall that the degree ${\rm deg}(L)$ of a line bundle $L$ on $B$ can be computed as 
follows. Choose any rational section $\sigma$ of $L$; the degree of $L$ is the number of zeroes of $\sigma$ minus the number of poles of $\sigma$ (both counted with multiplicities); this quantity does not depend on the choice of $\sigma$. See \cite[II, par. 6, Cor. 6.10]{Hartshorne-Algebraic} for details. 

The quantity 
$\mu(V)$ is called the {\it slope} of $V$. 
Recall that a non zero vector bundle $V$ on $B$ is called 
semistable if for any non zero coherent subsheaf $W\subseteq V$, we have 
$\mu(W)\leqslant\mu(V)$ (recall that a coherent subsheaf of $V$ is automatically 
locally free, because it is torsion free and $B$ is a Dedekind scheme).
 
Let $V$ be a non zero vector bundle on $B$. There is a unique filtration 
by coherent subsheaves
$$
0=V_{0}\subsetneq V_1\subsetneq V_2\subsetneq\dots\subsetneq V_{{\rm hn}(V)}=V
$$
such that all the sheaves $V_i/V_{i-1}$ ($1\leqslant i\leqslant{\rm hn}(V)$) are (vector bundles and) semistable and 
such that the sequence $\mu(V_i/V_{i-1})$ is strictly decreasing. 
This 
filtration is called the {\it Harder-Narasimhan filtration} of $V$ (short-hand: {\rm HN}\,filtration). One then defines 
$$
V_{\min}:=V/V_{{\rm hn}(V)-1},\,
V_{\max}(V):=V_1
$$
and
$$\mu_{\max}(V):=\mu(V_1),\,\mu_{\min}(V):=\mu(V_{\min}).
$$
One basic property of semistable sheaves that we shall use 
is the following. If 
$V$ and $W$ are non zero vector bundles on $B$ and $\mu_{\min}(V)>\mu_{\max}(W)$ then 
${\rm Hom}_B(V,W)=0$. This follows from the definitions. 

See  \cite[chap. 5]{Brenner-Herzog-Villamayor-Three} (for instance) for all these notions. 

If $V$ is a non zero vector bundle on $B$ 
we say that $V$ is {\it Frobenius semistable} if $F^{\circ r,*}_B(V)$ is semistable for all $r\geq 0$.  The terminology {\it strongly semistable} also appears in the literature. 

\begin{theorem} Let $V$ be a non zero vector bundle on $B$. 
%The sequence $\mu_{\min}(F^{\circ\ell,*}_S(V))/p^\ell$ 
%(resp. $\mu_{\max}(F^{\circ\ell,*}_S(V))/p^\ell$) is 
%decreasing (resp. increasing). 
There is an $\ell_0=\ell_0(V)\in{\mathbb N}$ such that the quotients of the Harder-Narasimhan filtration of $F^{\circ\ell_0,*}_B(V)$ are 
all Frobenius semistable. 
\label{SSth}
\end{theorem}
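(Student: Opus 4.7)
The plan is to combine the behaviour of Harder--Narasimhan polygons under iterated Frobenius pull-back with Langer's boundedness theorem for the instability of Frobenius pull-backs. For $n\geq 0$ let $P_n\colon[0,\mathrm{rk}(V)]\to\mathbb{R}$ denote the HN polygon of $F^{\circ n,*}_BV$, i.e.\ the concave piecewise linear function whose consecutive slopes are those of the successive HN quotients. Since Frobenius on the smooth curve $B$ is flat (Kunz), pulling back the HN filtration of $F^{\circ n,*}_BV$ yields a filtration of $F^{\circ(n+1),*}_BV$ with the same breakpoint ranks and with all slopes multiplied by $p$; the true HN polygon $P_{n+1}$ dominates the polygon of any such filtration, so $P_{n+1}\geq p\cdot P_n$ pointwise, with equality at the endpoints. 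The rescaled polygons $\bar P_n:=P_n/p^n$ therefore form a pointwise non-decreasing sequence of concave polygons with common endpoints $(0,0)$ and $(\mathrm{rk}(V),\deg V)$.

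The key input is Langer's theorem. On a smooth projective curve, the basic Langer inequality $\mu_{\max}(F^{*}_BW)\leq p\,\mu_{\max}(W)+(\mathrm{rk}(W)-1)\max\{2g_B-2,0\}$, iterated along the HN filtration, uniformly bounds $\bar P_n$ from above, and Langer's more refined analysis shows that the limit polygon has vertex heights rational with denominators bounded in terms of $\mathrm{rk}(V)$ and, crucially, that $\bar P_n$ actually stabilizes after finitely many Frobenius pull-backs. Fix $\ell_0$ such that $\bar P_n=\bar P_{\ell_0}$ for all $n\geq\ell_0$.

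The equality $P_{n+1}=p\cdot P_n$ for $n\geq\ell_0$, combined with the uniqueness of the HN filtration, forces the HN filtration of $F^{\circ(n+1),*}_BV$ to coincide with the Frobenius pull-back of the HN filtration of $F^{\circ n,*}_BV$. Iterating, each HN quotient $Q$ of $F^{\circ\ell_0,*}_BV$ has the property that $F^{\circ m,*}_BQ$ appears as an HN quotient of $F^{\circ(\ell_0+m),*}_BV$---hence is semistable---for every $m\geq 0$; that is, $Q$ is Frobenius semistable.

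The main obstacle is the stabilization step in the second paragraph: monotone convergence of $\bar P_n$ to a limit is elementary, but promoting this to genuine equality for large $n$ is the deep content of Langer's theorem, which relies on controlling both the magnitude and the arithmetic of the defect $\mu_{\max}(F^{*}_BW)-p\,\mu_{\max}(W)$. The remaining steps---the polygon inequality under $F^*$, the uniqueness of the HN filtration, and the induction along the filtration---are formal.
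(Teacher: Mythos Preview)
Your proposal is essentially correct and aligns with the paper, which does not give an independent argument at all: its proof consists solely of the citation ``See e.g.\ \cite[Th.~2.7, p.~259]{Langer-Semistable}.'' Your write-up is an expository unpacking of exactly that result---the polygon monotonicity $P_{n+1}\geq p\cdot P_n$, the Langer bound, and the stabilization argument are the ingredients of Langer's own proof---and you correctly flag that the stabilization step is the non-elementary input that cannot be avoided.
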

\begin{proof}
See eg \cite[Th. 2.7, p. 259]{Langer-Semistable}.
\end{proof}

Theorem \ref{SSth} shows in particular that the following definitions : 
$$
\bar\mu_{\min}(V):=\lim_{\ell\to\infty}\mu_{\min}(F^{\circ\ell,*}_B(V))/p^\ell,
$$
$$
\bar\mu_{\max}(V):=\lim_{\ell\to\infty}\mu_{\max}(F^{\circ\ell,*}_B(V))/p^\ell,
$$
make sense if $V$ is a non zero vector bundle on $B$. 

Note that since $F_B$ is faithfully flat, this implies that if 
$V$ and $W$ are non zero vector bundles on $B$ and $\bar\mu_{\min}(V)>\bar\mu_{\max}(W)$ then we also have 
${\rm Hom}_B(V,W)=0$. 

For use below, we note that one can show that the 
sequence $\mu_{\min}(F^{\circ\ell,*}_B(V))/p^\ell$ 
(resp. $\mu_{\max}(F^{\circ\ell,*}_B(V))/p^\ell$) is weakly decreasing (resp. increasing). 
See \cite[p. 258, before Cor. 2.5]{Langer-Semistable}. 

We shall need the following numerical estimate. 
To formulate it, let $V$ be a non zero vector bundle on $B$ and let 
$$
\alpha(V):={\max}\{\mu_{\min}(V)-\bar\mu_{\min}(V),\bar\mu_{\max}(V)-\mu_{\max}(V)\}.
$$
By the preceding remark, we have $\alpha(V)\geq 0.$ 
\begin{theorem}[Langer]
We have
$$
\alpha(V)\leqslant{{\rm rk}(V)-1\over p}{\max}\{\bar\mu_{\max}(\Omega_{B/k}),0\}.
$$
\label{laneff}
\end{theorem}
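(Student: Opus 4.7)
The plan is to prove Langer's estimate by exploiting the canonical connection on Frobenius pullbacks together with Cartier descent. The key tool is that for any non zero vector bundle $V$ on $B$, the Frobenius pullback $F_B^{*}(V)$ carries a canonical integrable connection
\[
\nabla_{\mathrm{can}} \colon F_B^{*}(V) \longrightarrow F_B^{*}(V) \otimes \Omega_{B/k},
\]
and by Cartier descent a subsheaf $E \subset F_B^{*}(V)$ descends to a subsheaf of $V$ if and only if the $\mathcal{O}_B$-linear composition $\phi_E \colon E \hookrightarrow F_B^{*}(V) \to \bigl(F_B^{*}(V)/E\bigr) \otimes \Omega_{B/k}$ induced by $\nabla_{\mathrm{can}}$ vanishes.

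The core of the argument is the one-step estimate: for $V$ semistable, I would show that
\[
\mu_{\max}(F_B^{*}(V)) - p\mu(V) \leq \bigl(\mathrm{rk}(V)-1\bigr)\,\max\{\bar\mu_{\max}(\Omega_{B/k}), 0\}.
\]
Let $W \subset F_B^{*}(V)$ be the maximal destabilizing subsheaf. If $\mu(W) > p\mu(V)$, then $W$ cannot descend (otherwise $F_B^{*}$, being faithfully flat of degree $p$, would produce a subsheaf of $V$ violating semistability), so $\phi_W$ is nonzero. Since $W$ is semistable, the image of $\phi_W$ has minimum slope at least $\mu(W)$, and it embeds into $(F_B^{*}(V)/W)\otimes\Omega_{B/k}$; applying the ``Hom vanishing'' principle (recalled after Theorem \ref{SSth}) in the tensor-product form coming from strong semistability, I get $\mu(W) \leq \mu_{\max}(F_B^{*}(V)/W) + \bar\mu_{\max}(\Omega_{B/k})$. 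Iterating this inequality down the HN filtration of $F_B^{*}(V)$, whose length is at most $\mathrm{rk}(V)$, each of the at most $\mathrm{rk}(V)-1$ drops in slope is bounded by $\max\{\bar\mu_{\max}(\Omega_{B/k}),0\}$, yielding the claimed one-step bound. A dual argument treats $\mu_{\min}$.

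Finally, I would conclude by dividing the one-step bound by $p$ and iterating across $F_B^{\circ\ell,*}$. Using the cited monotonicity (that $\mu_{\max}(F_B^{\circ\ell,*}V)/p^{\ell}$ is weakly increasing and $\mu_{\min}(F_B^{\circ\ell,*}V)/p^{\ell}$ is weakly decreasing), the sequences converge to $\bar\mu_{\max}(V)$ and $\bar\mu_{\min}(V)$, and since the one-step bound is independent of $\ell$, the telescoping sum collapses to give
\[
\bar\mu_{\max}(V) - \mu_{\max}(V) \leq \frac{\mathrm{rk}(V)-1}{p}\,\max\{\bar\mu_{\max}(\Omega_{B/k}), 0\},
\]
and similarly for $\mu_{\min}(V) - \bar\mu_{\min}(V)$, proving the theorem.

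The main obstacle is the passage from the one-step inequality for semistable $V$ to the bound on the image of $\phi_W$ inside $(F_B^{*}(V)/W)\otimes\Omega_{B/k}$: one needs a version of the ``slope of tensor product'' inequality that is valid beyond the strongly semistable range. This is precisely where Theorem \ref{SSth} enters as a black box, since tensor products of \emph{strongly} semistable sheaves are semistable on a curve, and one then has to pass carefully between the one-step estimate and the asymptotic $\bar\mu$ quantities to close the loop.
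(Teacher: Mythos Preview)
The paper does not prove this statement at all: its entire proof is the one-line citation ``See \cite[Cor.~6.2]{Langer-Semistable}.'' So there is nothing to compare your argument against on the level of the paper itself; what you have sketched is essentially Langer's own argument (going back to Shepherd-Barron in the curve case), based on the canonical connection on $F_B^*V$ and the observation that a maximal destabilizing subsheaf which fails to descend produces a nonzero $\mathcal O_B$-linear map into the quotient twisted by $\Omega_{B/k}$.

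Your sketch is correct in outline, but you are over-complicating the ``main obstacle'' you flag at the end. In the setting of this paper $B$ is a \emph{curve}, so $\Omega_{B/k}$ is a line bundle. Tensoring any sheaf $E$ with a line bundle $L$ shifts every slope in the HN filtration by $\deg(L)$; in particular $\mu_{\max}(E\otimes L)=\mu_{\max}(E)+\deg(L)$ and $\bar\mu_{\max}(\Omega_{B/k})=\deg(\Omega_{B/k})=2g_B-2$ on the nose. There is therefore no need for any tensor-product semistability theorem, and Theorem~\ref{SSth} is not what makes this step work. The inequality $\mu(W)\le\mu_{\max}(F_B^*V/W)+\deg(\Omega_{B/k})$ follows immediately from the fact that a nonzero map from the semistable sheaf $W$ lands in $(F_B^*V/W)\otimes\Omega_{B/k}$.

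One point that does deserve a word of care is the iteration for non-semistable $V$: the one-step bound you state is for semistable $V$, whereas after one pullback $F_B^*V$ need not be semistable. The clean way to handle this is to apply the one-step bound to the first HN piece $V_1$ of $V$ (so $\mu_{\max}(F_B^*V_1)\le p\mu(V_1)+(\mathrm{rk}(V)-1)\max\{2g_B-2,0\}$), observe that $\mu_{\max}(F_B^*V)=\mu_{\max}(F_B^*V_1)$ because Frobenius pullback is exact and the HN filtration refines under $F_B^*$, and then telescope. With that adjustment your geometric-series argument goes through and yields exactly the stated bound.
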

\begin{proof} See \cite[Cor. 6.2]{Langer-Semistable}.\end{proof}

\section{Proof of Theorem \ref{mainth}}

\label{SP}

We let ${\mathcal A}$ be the connected component of the identity of the N\'eron model 
of the Jacobian $A$ of $C$ over $B$. We suppose throughout the proof that 
${\mathcal A}$ is a semiabelian scheme. In (b), this is an assumption and in (a), we may assume 
it without restriction of generality, since $A$ will acquire semiabelian reduction on a finite 
extension of $K$ by a classical result of Grothendieck. 

We shall write $\omega=\omega_{\mathcal A}$ for the 
restriction of $\Omega_{{\mathcal A}/B}$ to $B$ by the zero section of ${\mathcal A}$. Under the assumptions of Theorem \ref{mainth}, we have ${\rm deg}_B(\omega)>0$. 
This is a consequence of  \cite[Th. 1]{Szpiro-PNDR} and \cite[9.5, Th. 4 (b) and 8.4, Th. 1]{BLR-Neron}, together with Grothendieck duality. 

% also use Grothendieck duality (Liu 6.4.3, def. 4.18 p. 243)
% and 9.4 Th. 1 in BLR
% and Th. 1.2 p. 310 dans Cornell-Silverman + Castelnuovo criterion Th. 3.1 p. 315
% to show compatibility of relative minimal model with localisation
% and Lemma 4.2 Abbes Semistable p. 85
% note that Neron models commute with etale localisation

Let us write $f:A\to{\rm Spec}(K)$ (resp. $\pi:{\mathcal A}\to B$) for the structural morphism. In particular 
$f=\pi_K$. 

Choose once and for all an element of $C(K)$ to determine an embedding of $C$ into its Jacobian 
and write $\iota:C\hookrightarrow A$ for this embedding. 
Write $g:C\to {\rm Spec}(K)$ for the structural morphism.

Fix $\ell_0\geq 1$. 

Let $P\in C(K_{\rm ins})\smallsetminus C(K^{p^{-\ell_0+1}})$. Let 
$\ell\geq 0$ be such that $[\kappa(P):K]=p^\ell$ (in other words 
we have $P\in C(K^{p^{-\ell}})\smallsetminus C(K^{p^{-\ell+1}})$). Note that by construction we 
have $\ell\geq \ell_0$. The point $P$ corresponds by definition to an element 
$\widetilde{P}\in C^{(p^\ell)}(K)$ and thus we obtain a commutative diagram

$$
\xymatrix{
C^{(p^{\ell})}\ar[dr]^{g^{(p^\ell)}}\ar[r] & C\ar[r]^{\iota}\ar[rd]^{g}& A\ar[d]^{f}\\
 & {\rm Spec}(K)\ar[r]^{F^{\circ\ell}_K\,\,\,}\ar@/^1pc/[ul]^{\widetilde{P}}\ar[u]^{\bf P} & {\rm Spec}(K)
}
$$

where ${\bf P}$ is a closed immersion and the parallelogram on the left is cartesian. We used the short-hand $F^{\circ\ell}_K$ for 
$F^{\circ\ell}_{{\rm Spec}(K)}.$ This diagram induces morphisms of differentials
\begin{equation}
{\bf P}^*(\iota^*(\Omega_f))\to {\bf P}^*(\Omega_g)\to\Omega_{F_K^{\circ\ell}}
\label{IE}
\end{equation}

where the morphism ${\bf P}^*(\iota^*(\Omega_f))\to {\bf P}^*(\Omega_g)$ is induced by the 
natural morphism $$\iota^*(\Omega_f)\to\Omega_g.$$ In particular, both arrows in 
\eqref{IE} are surjective maps of $K$-vector spaces, since $\iota$ and ${\bf P}$ are closed immersions.

Recall that we have a canonical isomorphism $\Omega_\pi\simeq\pi^*(\omega_{\mathcal A})$, since ${\mathcal A}$ is a group scheme over $B$. 
Thus we have a canonical isomorphism $$F_K^{\circ\ell,*}(\omega_K)\simeq {\bf P}^*(\iota^*(\Omega_f)).$$

Composing this isomorphism with the two arrows in \eqref{IE}, we obtain 
a map 
$$F_K^{\circ\ell,*}(\omega_K)\to\Omega_{F_K^{\circ\ell}},$$
which obviously depends on $P$.

We shall now make use of the following result. 

\begin{proposition}
The map $$F_K^{\circ\ell,*}(\omega_K)\to\Omega_{F_K^{\circ\ell}}$$ extends 
to a morphism of vector bundles
$$
\phi_P:F_B^{\circ\ell,*}(\omega)\to\Omega_{F_B^{\circ\ell}}(\Sigma).
$$
\label{cruxlem}
\end{proposition}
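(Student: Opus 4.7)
The plan is to extend the generic map pointwise: automatic extension over the open set of good reduction, and pole-order control at the finite bad locus coming from the semiabelian structure of $\mathcal{A}$. Since $B$ is a smooth curve, a morphism of coherent sheaves on a dense open extends uniquely to a rational morphism on $B$, so the generic map $F_K^{\circ\ell,*}(\omega_K)\to\Omega_{F_K^{\circ\ell}}$ automatically determines a rational morphism $F_B^{\circ\ell,*}(\omega)\dashrightarrow \Omega_{F_B^{\circ\ell}}$, whose poles are supported on a finite subset of $B$. The task is therefore to localise at each closed point $b\in B$ and bound the pole order: $0$ if $b\notin\Sigma$, and at most $1$ if $b\in\Sigma$.

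For a point $b\notin\Sigma$, the connected N\'eron model $\mathcal{A}$ is abelian over a neighbourhood $U$ of $b$, so the canonical identification $\Omega_\pi\simeq\pi^*(\omega)$ is valid integrally on $U$. Since $C$ has a proper model $\mathcal{C}$ over $B$ and $\mathcal{C}|_U$ is smooth (good reduction), the section $\widetilde{P}\in C^{(p^\ell)}(K)$ spreads out by the valuative criterion of properness to a section $U\to \mathcal{C}^{(p^\ell)}|_U$, and the N\'eron mapping property extends $\iota$ to a morphism $\mathcal{C}|_U\to\mathcal{A}|_U$. The whole construction of (\ref{IE}) is then available integrally over $U$, and the resulting composition gives a bona fide morphism of $\mathcal{O}_U$-modules extending the generic map without any pole at $b$.

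For $b\in\Sigma$, the same spreading out is not available because $\iota$ does not extend to the bad fibre as a morphism to $\mathcal{A}$ in general. However, after picking a uniformiser $t_b\in\mathcal{O}_{B,b}$, one can write the generic map in local coordinates and compute its order along $(t_b=0)$. Here the point is that $\mathcal{A}$ is assumed semiabelian, so $\omega$ is still a vector bundle at $b$ and $\Omega_{F_B^{\circ\ell}}$ is a line bundle at $b$, but the restriction of a differential on $\mathcal{A}$ coming from the generic fibre of $C$ only acquires logarithmic singularities along the toric part of the special fibre of $\mathcal{A}$. This is exactly the local statement quoted from \textup{\cite{Rossler-PII}}, and once applied to each $b\in\Sigma$ one obtains that the rational morphism on $B$ factors through $\Omega_{F_B^{\circ\ell}}(\Sigma)$.

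The main obstacle is the local analysis at bad reduction, i.e.\ Step~(2): one needs that the restriction-style map defining $\phi_P$ has at most a simple pole along the toric directions of $\mathcal{A}$ at $b$. This is really an input from the theory of log differentials on semiabelian degenerations and is handled by the cited reference; the diagram-chase through the (cartesian) Frobenius square in the statement does not affect this bound, because iterating absolute Frobenius does not create new poles on $\Omega_{B/k}$ (its relative differentials are identified with $\Omega_{B/k}$). Granting this input, the two steps combine to give the required global extension $\phi_P:F_B^{\circ\ell,*}(\omega)\to\Omega_{F_B^{\circ\ell}}(\Sigma)$.
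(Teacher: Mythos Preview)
Your proposal is correct and takes essentially the same route as the paper: the paper's proof is a one-line citation to \cite[Lemma~B.2]{Rossler-PII}, and you likewise defer the only non-trivial step---the logarithmic pole bound at the semiabelian bad fibres---to that same reference. Your added sketch of the good-reduction case (spreading out $\widetilde P$ by properness, extending $\iota$ by the N\'eron mapping property, and running \eqref{IE} integrally) is accurate and matches the remark following the proposition that the genuine content lies in the log-differential analysis over the semiabelian degeneration.
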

\begin{proof} See \cite[Lemma B.2]{Rossler-PII}.\end{proof}
\begin{remark}\rm Note that the proof of Proposition \ref{cruxlem} is not elementary. It makes use 
of logarithmic differentials and of the semistable compactifications of semiabelian schemes constructed 
by Faltings, Chai and Mumford.\end{remark}
Write $b:B\to{\rm Spec}(k)$ for the structural morphism. 
Note that we have an exact sequence 
$$
F_B^{\circ\ell,*}(\Omega_b)\stackrel{F_B^{\circ\ell,*}}{\to}\Omega_{F_k^{\circ\ell}\circ b}\to \Omega_{F_B^{\circ\ell}}\to 0
$$
and a standard computation shows that the first arrow in the sequence vanishes. Hence we have a canonical isomorphism
$$
\Omega_{F_k^{\circ\ell}\circ b}\simeq\Omega_{F_B^{\circ\ell}}
$$
and since $F_k^{\circ\ell}$ is an isomorphism (because $k$ is perfect), we have a canonical isomorphism 
$\Omega_{F_k^{\circ\ell}\circ b}\simeq \Omega_{b}$ so that we finally get a canonical isomorphism
$$
\Omega_{B/k}=\Omega_{b}\simeq\Omega_{F_B^{\circ\ell}}.
$$
In particular, $\Omega_{F_B^{\circ\ell}}$ is a line bundle. 
This implies in particular that the arrow 
$${\bf P}^*(\Omega_g)\to\Omega_{F_K^{\circ\ell}}$$ is an isomorphism
because both ${\bf P}^*(\Omega_g)$ and $\Omega_{F_K^{\circ\ell}}$ are $K$-vector spaces of dimension one 
and the arrow is surjective. 

We shall now try to exhibit a differential form on $C$, which vanishes on the non rational 
purely inseparable points of $C$.

Recall that we know that ${\rm deg}_B(\omega)>0$. In particular, we must have $\mu_{\max}(\omega)>0$. Let $\omega_1$ be the first step of the HN filtration of $\omega.$ By definition, we have $\mu(\omega_1)=\mu_{\max}(\omega)$. Suppose now
until further notice that  the inequality
\begin{equation}
p^{\ell_0}(\mu(\omega_1)-\alpha(\omega_1))>\bar\mu_{\max}(\Omega_{F_B^{\circ\ell}}(\Sigma))
\label{BH0}
\end{equation}
holds. Since $\Omega_{F_B^{\circ\ell}}\simeq\Omega_{B/k}$, we have 
$$\bar\mu_{\max}(\Omega_{F_B^{\circ\ell}}(\Sigma))=2g_B-2+N.$$ 
Thus inequality \eqref{BH0} is equivalent to the inequality
\begin{equation}
p^{\ell_0}(\mu(\omega_1)-\alpha(\omega_1))>2g_B-2+N.
\label{BH}
\end{equation}
We then have
$$
p^{\ell}\bar\mu_{\min}(\omega_1)\geq p^{\ell_0}\bar\mu_{\min}(\omega_1) >\bar\mu_{\max}(\Omega_{F_B^{\circ\ell}}(\Sigma))
$$
 (by the definition of $\alpha(\bullet)$) and thus
$$
\bar\mu_{\min}(F_B^{\circ\ell,*}(\omega_1))=
p^{\ell}\bar\mu_{\min}(\omega_1)>\bar\mu_{\max}(\Omega_{F_B^{\circ\ell}}(\Sigma)).
$$
This implies that $\phi_P(F_B^{\circ\ell,*}(\omega_1))=0$ (see before Theorem \ref{laneff}).

On the other hand, if 
$\phi_P(F_B^{\circ\ell,*}(\omega_1))=0$ then from the fact that the arrow ${\bf P}^*(\Omega_g)\to\Omega_{F_K^{\circ\ell}}$ is an isomorphism, we can conclude that  
the arrow
\begin{equation}
{\bf P}^*(\iota^*(f^*(\omega_{1,K}))\to {\bf P}^*(\Omega_g)
\label{IA}
\end{equation}
vanishes. 

Now fix $\lambda\in\omega_{1,K}$ with $\lambda\not=0$. 
Recall that the canonical map
$$
H^0(A,\Omega_f)\to H^0(C,\Omega_g)
$$
induced by $\iota$ is an isomorphism, because $A$ is the Jacobian of $C$. 
Via the identification $\omega_K\simeq H^0(A,\Omega_f)$, the element $\lambda$ corresponds 
an element of $H^0(A,\Omega_f)$ and thus to a non zero element $\lambda_C$ of 
$H^0(C,\Omega_g)$. Since the arrow \eqref{IA} vanishes, we conclude that $P$ is contained in  
the vanishing locus $Z(\lambda_C)$ of $\lambda_C$. 
We have
$$
{\rm deg}(\Omega_g)\geq\sum_{z\in Z(\lambda_C)}[\kappa(z):K]\geq\#Z(\lambda_C)
$$
In particular, since $P\in C(K_{\rm ins})\smallsetminus C(K^{p^{-\ell_0+1}})$ was arbitrary, we see that 
\begin{equation*}
\#(C(K_{\rm ins})\smallsetminus C(K^{p^{-\ell_0+1}}))\leq\#Z(\lambda_C)\leq{\rm deg}(\Omega_g)=2g_C-2.
% \label{C1}
\end{equation*}
Furthermore, we see that
\begin{equation*}
p^\ell=[\kappa(P):K]\leq 2g_C-2.
% \label{C2}
\end{equation*}

{\it To summarise}:

Let $\ell_0\geq 1$. 

(A) If $p^{\ell_0}(\mu(\omega_1)-\alpha(\omega_1))>2g_B-2+N$ then $\#(C(K_{\rm ins})\smallsetminus C(K^{p^{-\ell_0+1}}))\leq 2g_C-2.$

(B) If $\ell\geq\ell_0$,  $P\in C(K^{p^{-\ell}})\smallsetminus C(K^{p^{-\ell+1}})$ and 
$p^{\ell_0}(\mu(\omega_1)-\alpha(\omega_1))>2g_B-2+N$ then 
$p^\ell\leq 2g_C-2.$

We now prove (a) in Theorem \ref{mainth}. Note that to prove (a), we may by definition 
replace $C$ by $C^{(p^c)}$ for any $c\geq 0$. By Theorem \ref{SSth}, we may thus assume that the quotients of the {\rm HN}\,filtration of 
$\omega_{\mathcal A}$ are Frobenius semistable. In particular, we may assume that $\omega_1$ is 
Frobenius semistable and thus that 
$\alpha(\omega_1)=0$. Now choose $\ell_0\geq 1$ so that 
$$p^{\ell_0}>(2g_B-2+N)/\mu(\omega_1).$$
Then  (A) above implies that 
$C(K_{\rm ins})\smallsetminus C(K^{p^{-\ell_0+1}})$ is finite. By the Mordell conjecture over function fields 
(see remark \ref{ML}) $C(K^{p^{-\ell_0+1}})$ is also finite and this concludes the proof of (a).

We now turn to the proof of (b). We are going to seek numerical conditions ensuring that 
inequality \eqref{BH} holds, ie that 
$$p^{\ell_0}(\mu(\omega_1)-\alpha(\omega_1))>2g_B-2+N.
$$
Note that by Theorem \ref{laneff} we have 
$$
\alpha(\omega_1)\leqslant{{\rm rk}(\omega_1)-1\over p}{\max}\{\bar\mu_{\max}(\Omega_{B/k}),0\}.
$$
Furthermore, we clearly have ${\rm rk}(\omega_1)\leq {\rm rk}(\omega)=g_C$ and 
$$\bar\mu_{\max}(\Omega_{B/k})={\rm deg}(\Omega_{B/k})=2g_B-2.$$ Thus we have 
$$
\alpha(\omega_1)\leqslant{(g_C-1){\max}\{2g_B-2,0\}\over p}
$$
Also we have the simple lower bound $\mu(\omega_1)\geq 1/g_C$. 

For \eqref{BH} to hold it is thus sufficient to have
$$
p^{\ell_0}\Big(1/g_C-{(g_C-1){\max}\{2g_B-2,0\})\over p}\Big)>2g_B-2+N
$$
or in other words, to have
\begin{equation}
p^{\ell_0}-p^{\ell_0-1}g_C(g_C-1){\max}\{2g_B-2,0\}-g_C(2g_B-2)-g_C N>0.
\label{FEQ}
\end{equation}
From (A) above we see that if $\ell_0\geq 1$ and inequality \eqref{FEQ} holds then $$\#(C(K_{\rm ins})\smallsetminus C(K^{p^{-\ell_0+1}}))\leq 2g_C-2.$$ This is 
the second statement in (b). The first statement in (b) is now a direct consequence of (B). 

\begin{bibdiv}
\begin{biblist}

\bib{Abbes-RSS}{article}{
   author={Abbes, Ahmed},
   title={R\'{e}duction semi-stable des courbes d'apr\`es Artin, Deligne,
   Grothendieck, Mumford, Saito, Winters, $\ldots$},
   language={French},
   conference={
      title={Courbes semi-stables et groupe fondamental en g\'{e}om\'{e}trie
      alg\'{e}brique},
      address={Luminy},
      date={1998},
   },
   book={
      series={Progr. Math.},
      volume={187},
      publisher={Birkh\"{a}user, Basel},
   },
   date={2000},
   pages={59--110},
   review={\MR{1768094}},
}

\bib{BLR-Neron}{book}{
   author={Bosch, Siegfried},
   author={L\"{u}tkebohmert, Werner},
   author={Raynaud, Michel},
   title={N\'{e}ron models},
   series={Ergebnisse der Mathematik und ihrer Grenzgebiete (3) [Results in
   Mathematics and Related Areas (3)]},
   volume={21},
   publisher={Springer-Verlag, Berlin},
   date={1990},
   pages={x+325},
   isbn={3-540-50587-3},
   review={\MR{1045822}},
   doi={10.1007/978-3-642-51438-8},
}

\bib{Brenner-Herzog-Villamayor-Three}{collection}{
  author={Brenner, Holger},
  author={Herzog, J{\"u}rgen},
  author={Villamayor, Orlando},
  title={Three lectures on commutative algebra},
  series={University Lecture Series},
  volume={42},
  note={Lectures from the Winter School on Commutative Algebra and Applications held in Barcelona, January 30--February 3, 2006; Edited by Gemma Colom\'e-Nin, Teresa Cortadellas Ben\'\i tez, Juan Elias and Santiago Zarzuela},
  publisher={American Mathematical Society},
  place={Providence, RI},
  date={2008},
  pages={vi+190},
  isbn={978-0-8218-4434-2},
  isbn={0-8218-4434-2},
}

\bib{Hartshorne-Algebraic}{book}{
   author={Hartshorne, Robin},
   title={Algebraic geometry},
   note={Graduate Texts in Mathematics, No. 52},
   publisher={Springer-Verlag, New York-Heidelberg},
   date={1977},
   pages={xvi+496},
   isbn={0-387-90244-9},
   review={\MR{0463157}},
}

\bib{Huybrechts-Lehn-The-geometry}{book}{
  author={Huybrechts, Daniel},
  author={Lehn, Manfred},
  title={The geometry of moduli spaces of sheaves},
  series={Aspects of Mathematics, E31},
  publisher={Friedr. Vieweg \& Sohn, Braunschweig},
  date={1997},
  pages={xiv+269},
  isbn={3-528-06907-4},
  doi={10.1007/978-3-663-11624-0},
}

\bib{Kim-Purely}{article}{
   author={Kim, Minhyong},
   title={Purely inseparable points on curves of higher genus},
   journal={Math. Res. Lett.},
   volume={4},
   date={1997},
   number={5},
   pages={663--666},
   issn={1073-2780},
   review={\MR{1484697}},
   doi={10.4310/MRL.1997.v4.n5.a4},
}

\bib{Langer-Semistable}{article}{
  author={Langer, Adrian},
  title={Semistable sheaves in positive characteristic},
  journal={Ann. of Math. (2)},
  volume={159},
  date={2004},
  number={1},
  pages={251--276},
  issn={0003-486X},
  doi={10.4007/annals.2004.159.251},
}

\bib{Rossler-PII}{article}{
author={R\"ossler, Damian},
title={On the group of purely inseparable points of an abelian variety defined over a function field of positive characteristic II},
status={arXiv:1702.07142}}

\bib{Samuel-Comp}{article}{
   author={Samuel, Pierre},
   title={Compl\'{e}ments \`a un article de Hans Grauert sur la conjecture de
   Mordell},
   language={French},
   journal={Inst. Hautes \'{E}tudes Sci. Publ. Math.},
   number={29},
   date={1966},
   pages={55--62},
   issn={0073-8301},
   review={\MR{0204430}},
}

\bib{Szpiro-PNDR}{article}{
   author={Szpiro, Lucien},
   title={Propri\'{e}t\'{e}s num\'{e}riques du faisceau dualisant relatif},
   language={French},
   note={Seminar on Pencils of Curves of Genus at Least Two},
   journal={Ast\'{e}risque},
   number={86},
   date={1981},
   pages={44--78},
   issn={0303-1179},
   review={\MR{3618571}},
}

\end{biblist}
\end{bibdiv}

\end{document}